\newtheorem{thm}{Theorem}[section]
\newtheorem{lem}[thm]{Lemma}
\newtheorem{rem}[thm]{Remark}
\theoremstyle{definition}
\newcommand{\scr}[1]{\mathscr #1}
\definecolor{wco}{rgb}{0.5,0.2,0.3}
\numberwithin{equation}{section} \theoremstyle{remark}
\newcommand{\ua}{\uparrow}
\title{{\bf  Exponential Convergence for Functional SDEs with H\"{o}lder Continuous Drift}\footnote{Supported in
 part by  NNSFC (11801406).}}
\author{
{\bf   Xing Huang }\\
\footnotesize{ Center for Applied Mathematics, Tianjin
University, Tianjin 300072, China}\\
\footnotesize{  xinghuang@tju.edu.cn}}
\begin{document}
\allowdisplaybreaks
\def\R{\mathbb R}  \def\ff{\frac} \def\ss{\sqrt} \def\B{\mathbf
B} \def\W{\mathbb W}
\def\N{\mathbb N} \def\kk{\kappa} \def\m{{\bf m}}
\def\ee{\varepsilon}\def\ddd{D^*}
\def\dd{\delta} \def\DD{\Delta} \def\vv{\varepsilon} \def\rr{\rho}
\def\<{\langle} \def\>{\rangle} \def\GG{\Gamma} \def\gg{\gamma}
  \def\nn{\nabla} \def\pp{\partial} \def\E{\mathbb E}
\def\d{\text{\rm{d}}} \def\bb{\beta} \def\aa{\alpha} \def\D{\scr D}
  \def\si{\sigma} \def\ess{\text{\rm{ess}}}
\def\beg{\begin} \def\beq{\begin{equation}}  \def\F{\scr F}
\def\Ric{\text{\rm{Ric}}} \def\Hess{\text{\rm{Hess}}}
\def\e{\text{\rm{e}}} \def\ua{\underline a} \def\OO{\Omega}  \def\oo{\omega}
 \def\tt{\tilde} \def\Ric{\text{\rm{Ric}}}
\def\cut{\text{\rm{cut}}} \def\P{\mathbb P} \def\ifn{I_n(f^{\bigotimes n})}
\def\C{\scr C}      \def\aaa{\mathbf{r}}     \def\r{r}
\def\gap{\text{\rm{gap}}} \def\prr{\pi_{{\bf m},\varrho}}  \def\r{\mathbf r}
\def\Z{\mathbb Z} \def\vrr{\varrho} \def\ll{\lambda}
\def\L{\scr L}\def\Tt{\tt} \def\TT{\tt}\def\II{\mathbb I}
\def\i{{\rm in}}\def\Sect{{\rm Sect}}  \def\H{\mathbb H}
\def\M{\scr M}\def\Q{\mathbb Q} \def\texto{\text{o}} \def\LL{\Lambda}
\def\Rank{{\rm Rank}} \def\B{\scr B} \def\i{{\rm i}} \def\HR{\hat{\R}^d}
\def\to{\rightarrow}\def\l{\ell}\def\iint{\int}
\def\EE{\scr E}\def\Cut{{\rm Cut}}
\def\A{\scr A} \def\Lip{{\rm Lip}}
\def\BB{\scr B}\def\Ent{{\rm Ent}}\def\L{\scr L}
\def\R{\mathbb R}  \def\ff{\frac} \def\ss{\sqrt} \def\B{\mathbf
B}
\def\N{\mathbb N} \def\kk{\kappa} \def\m{{\bf m}}
\def\dd{\delta} \def\DD{\Delta} \def\vv{\varepsilon} \def\rr{\rho}
\def\<{\langle} \def\>{\rangle} \def\GG{\Gamma} \def\gg{\gamma}
  \def\nn{\nabla} \def\pp{\partial} \def\E{\mathbb E}
\def\d{\text{\rm{d}}} \def\bb{\beta} \def\aa{\alpha} \def\D{\scr D}
  \def\si{\sigma} \def\ess{\text{\rm{ess}}}
\def\beg{\begin} \def\beq{\begin{equation}}  \def\F{\scr F}
\def\Ric{\text{\rm{Ric}}} \def\Hess{\text{\rm{Hess}}}
\def\e{\text{\rm{e}}} \def\ua{\underline a} \def\OO{\Omega}  \def\oo{\omega}
 \def\tt{\tilde} \def\Ric{\text{\rm{Ric}}}
\def\cut{\text{\rm{cut}}} \def\P{\mathbb P} \def\ifn{I_n(f^{\bigotimes n})}
\def\C{\scr C}      \def\aaa{\mathbf{r}}     \def\r{r}
\def\gap{\text{\rm{gap}}} \def\prr{\pi_{{\bf m},\varrho}}  \def\r{\mathbf r}
\def\Z{\mathbb Z} \def\vrr{\varrho} \def\ll{\lambda}
\def\L{\scr L}\def\Tt{\tt} \def\TT{\tt}\def\II{\mathbb I}
\def\i{{\rm in}}\def\Sect{{\rm Sect}}  \def\H{\mathbb H}
\def\M{\scr M}\def\Q{\mathbb Q} \def\texto{\text{o}} \def\LL{\Lambda}
\def\Rank{{\rm Rank}} \def\B{\scr B} \def\i{{\rm i}} \def\HR{\hat{\R}^d}
\def\to{\rightarrow}\def\l{\ell}
\def\8{\infty}\def\I{1}\def\U{\scr U}
 \def\T{\scr T}
\maketitle

\begin{abstract} Applying Zvonkin's transform, the exponential convergence in Wasserstein distance for a class of functional SDEs with H\"{o}lder continuous drift is obtained. This combining with log-Harnack inequality implies the same convergence in the sense of entropy, which also yields the convergence in total variation norm by Pinsker's inequality.
\end{abstract} \noindent
 AMS subject Classification:\  60H10, 60B10.   \\
\noindent
 Keywords: Functional SDEs, Zvonkin's transform,  H\"{o}lder continuous, Exponential convergence, Wasserstein distance.
 \vskip 2cm

\section{Introduction}
Consider the SDE on $\mathbb{R}^d$
\begin{align}\label{EE0}\d X(t)=b(X(t))\d t+\d W(t),
\end{align}
where $b:\mathbb{R}^d\to \mathbb{R}^d$, $W$ is a $d$-dimensional Brownian motion on some complete filtration probability space. If the dissipative condition
$$\langle b(x)-b(y),x-y\rangle\leq -\kappa_0|x-y|^2, \ \ x,y \in\mathbb{R}^d $$
holds for some $\kappa_0>0$, then SDE \eqref{EE0} has a unique solution and the associated semigroup has exponential convergence in Wasserstein distance, see \cite{BGG}. \cite{LW} has proved exponential convergence in $L^p$ Wasserstein distance under the condition
$$\langle b(x)-b(y),x-y\rangle\leq
      K_1|x-y|^21_{\{|x-y|\leq L\}}-K_2|x-y|^21_{\{|x-y|>L\}}
$$
for some positive constants $K_1, K_2, L$.
\cite{W00} extends this to diffusion semi-
groups on Riemannian manifold with negative curvature. Concerning the functional SDE, the method of reflection coupling used in \cite{LW} and \cite{W00} is so difficult to construct that we can not obtain similar results as in \cite{LW} and \cite{W00}. Recently, the exponential convergence in the sense of Wasserstein distance and total variation norm has been obtained in \cite{BWY150,BWY15} for a class of functional SDEs/SPDEs with regular coefficients and additive noise, where exponential convergence in total variation norm is proved due to the gradient-$L^2$ estimate
$$|\nabla P_t f|^2\leq C P_t |f|^2, \ \ t>r_0, f\in\B_b(\C),$$
see \cite{BWY150,BWY15} for more details.
On the other hand, using Zvonkin's transform \cite{AZ}, the strong well-posedness of SDEs is proved for SDEs with singular drifts, see \cite{GM,KR,W,Z,Z2,Z3}. For the functional SDEs with singular drift, \cite{H} proved the existence and uniqueness. In infinite dimension, \cite{HW,HZ} obtain the existence and uniqueness of the mild solution for a class of semi-linear functional SPDEs with Dini continuous drift and establish the Harnack inequality. In this paper, we will study the exponential convergence for functional SDEs with H\"{o}lder continuous drift.

Recall that for two probability measures $\mu,\nu$ on   some measurable space $(E,\scr F)$, the entropy and total variation norm are defined as follows:
$$\Ent(\nu|\mu):= \beg{cases} \int (\log \ff{\d\nu}{\d\mu})\d\nu, \ &\text{if}\ \nu\ \text{ is\ absolutely\ continuous\ with\ respect\ to}\ \mu,\\
 \infty,\ &\text{otherwise;}\end{cases}$$ and
$$\|\mu-\nu\|_{var} := \sup_{A\in\F}|\mu(A)-\nu(A)|.$$ By Pinsker's inequality (see \cite{CK, Pin}),
\beq\label{ETX} \|\mu-\nu\|_{var}^2\le \ff 1 2 \Ent(\nu|\mu),\ \ \mu,\nu\in \scr P(E),\end{equation}
here $\scr P(E)$ denotes all probability measures on $(E,\F)$.  Indeed, these two estimates correspond to  the log-Harnack inequality  for the associated semigroups, see Lemma \ref{Llog} below for details.

When $E$ is a Polish space, in particular, $E=\C$ in our frame, which will be defined in the sequel,
let
$$\scr P_2 := \big\{\mu\in \scr P(\C): \mu(\|\cdot\|_{\infty}^2)<\infty\big\}.$$    It is well known that
$\scr P_2$ is a Polish space under the Wasserstein distance
$$\W_2(\mu,\nu):= \inf_{\pi\in \mathbf{C}(\mu,\nu)} \bigg(\int_{\C\times\C} \|\xi-\eta\|_{\infty}^2 \pi(\d \xi,\d \eta)\bigg)^{\ff 1 {2}},\ \ \mu,\nu\in \scr P_{2},$$ where $\mathbf{C}(\mu,\nu)$ is the set of all couplings of $\mu$ and $\nu$.  Moreover,   the topology induced by $\W_2$ on $\scr P_2$ coincides with the weak topology.

The purpose of this paper is to establish the exponential convergence in the sense of  Wasserstein distance, the entropy and total variation norm respectively for functional SDEs with H\"{o}lder continuous drift, which is much weaker than the Lipschitz  condition.

Throughout the paper, we fix $r_0>0$ and consider the path space $\C:= C([-r_0,0];\R^d)$ equipped with the uniform norm $\|\xi\|_\infty:=\sup_{s\in [-r_0.0]}|\xi(s)|. $ For any $f\in C([-r_0,\infty);\mathbb{R}^d)$, $t\geq 0$, let $f_{t}(s)=f(t+s), s\in [-r_0,0]$. Then $f_{t} \in \C$. $\{f_t\}_{t\geq 0}$ is called the segment process of $f$.
Consider the following functional SDE on $\R^d$:
\beq\label{E1} \d X(t)=-\beta X(t)\d t+\{b(X(t))+B(X_t)\}\d t+\d W(t),\end{equation}
where $W(t)$ is a $d$-dimensional Brownian motion on a complete filtration probability space $(\OO,\{\F_t\}_{t\ge 0},\F, \P)$, $\beta>0$ and
$$b:\R^d\to \R^d,\ \ B:\C\to \R^d$$ are measurable.

The remainder of the paper is organized as follows. In Section 2 we summarize the main results of the paper; In section 3, we give precise estimate for Zvonkin's transform and the main results are proved in Section 4.

\section{Main Results}
Throughout this paper, we make the following assumptions:
\beg{enumerate} \item[{\bf (H1)}] $b$ is bounded, i.e.
\begin{equation}\label{bao3} \|b\|_{\infty}<\infty.
\end{equation}
Moreover, there exist constants $\kappa>0$ and $\alpha\in(0,1)$ such that
 \begin{align}\label{b-in}
|b(x)-b(y)|\leq \kappa|x-y|^\alpha, \ \ x,y \in\mathbb{R}^d.
\end{align}
\item[{\bf (H2)}] There exists a constant $\lambda_B>0$ such that
\begin{align}\label{B-Lip}\|B\|_{\infty}<\infty, \ \ |B(\xi)-B(\eta)|\leq\lambda_B\|\xi-\eta\|_\infty, \ \ \ \ \xi,\eta\in\C.
\end{align}
\end{enumerate}

Since H\"{o}lder continuity is stronger than Dini continuity, according to \cite[Theorem 2.1]{HZ} for $\mathbb{H}=\mathbb{R}^d$, under {\bf(H1)}-{\bf(H2)}, the SDE \eqref{E1} has a unique non-explosive solution denoted by $X_t^\xi$ with $X_0=\xi$ and
\begin{align}\label{sup0}\mathbb{E}\sup_{t\in[0,T]}\|X_t^\xi\|_{\infty}<\infty, \ \ T>0.
\end{align} Let $P_t(\xi,\d \eta)$ be the distribution of $X_t^\xi$, and $$P_tf(\xi):=\int_{\C}f(\eta)P_t(\xi,\d \eta), \ \ f\in\B_b(\C).$$
Moreover, for any $\nu\in\scr P_2$, let $\nu P_t:=\int_{\C}P_t(\xi,\cdot)\nu(\d \xi)$. Then $\nu P_t$ is the distribution of the solution $X_t$ to \eqref{E1} from initial distribution $\nu$. In particular, $P_t(\xi,\cdot)=\delta_\xi P_t$, here, $\delta_\xi$ is the Dirac measure on $\xi$. 

The lemma below gives the estimate of $\Ent(P_{t}(\xi,\cdot)|P_{t}(\eta,\cdot))$ and $\|P_t(\xi,\cdot)-P_t(\eta,\cdot)\|_{var}$ respectively.
\begin{lem}\label{Llog} Assume {\bf(H1)}-{\bf(H2)}. Then the log-Harnack inequality holds, i.e.
\beq\label{log0}
P_t\log f(\eta)\leq \log P_t f(\xi)+\frac{C(t)}{(t-r_0)\wedge 1}\|\xi-\eta\|_{\infty}^2,\ \ t>r_0, \xi,\eta\in\C, f\in\B^+_b(\mathbb{R}^d)
\end{equation}
for some function $C:(r_0,\infty)\to(0,\infty)$. Thus, for any $t>r_0$, $P_t(\xi,\cdot)$ is equivalent to $P_t(\eta,\cdot)$. Moreover,
\begin{align}\label{Ent}\Ent(P_{t}(\xi,\cdot)|P_{t}(\eta,\cdot))=P_t\left\{\log\frac{\d P_t(\xi,\cdot)}{\d P_t(\eta,\cdot)}\right\}(\xi)\leq  \frac{C(t)}{(t-r_0)\wedge 1}\|\xi-\eta\|_{\infty}^2, \end{align}
and
\beq\label{TT}\|P_t(\xi,\cdot)-P_t(\eta,\cdot)\|_{var}^2\le  \frac{C(t)}{2(t-r_0)\wedge 2}\|\xi-\eta\|_{\infty}^2. \end{equation}
\end{lem}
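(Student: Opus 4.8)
The plan is to treat the three displays in order of logical dependence: the log-Harnack inequality \eqref{log0} is the substantial input, after which \eqref{Ent} and \eqref{TT} follow by soft, general arguments. I expect \eqref{log0} itself to require essentially no new work here, since H\"older continuity implies Dini continuity (as already noted above), so {\bf(H1)}--{\bf(H2)} are a special case of the hypotheses under which \cite{HZ} establishes the log-Harnack inequality for semilinear functional SPDEs with Dini continuous drift; specializing \cite{HZ} to $\H=\R^d$ should give \eqref{log0}. For completeness I would recall the mechanism. Fixing $t>r_0$ and $\xi,\eta\in\C$, let $X$ solve \eqref{E1} from $\xi$ driven by $W$ under $\P$, and construct a coupled process $Y$ from $\eta$ of the form $\d Y(s)=\{-\beta Y(s)+b(Y(s))+B(Y_s)\}\d s+v(s)\d s+\d W(s)$ with an adapted control $v$ chosen so that the difference $\gg(s):=Y(s)-X(s)$ is driven to $0$ on $[0,t-r_0]$ and then held at $0$ on $[t-r_0,t]$ (on the latter interval the choice $v(s)=B(X_s)-B(Y_s)$ keeps $\gg\equiv0$). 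This forces $Y_t=X_t$ as elements of $\C$, so the resulting log-Harnack holds for all $f\in\B^+_b(\C)$, which is what \eqref{Ent} needs. By Girsanov's theorem $Y$ solves \eqref{E1} from $\eta$ under $\Q:=R\P$ with $R=\exp(-\int_0^t\<v(s),\d W(s)\>-\ff12\int_0^t|v(s)|^2\d s)$.

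Granting the construction, \eqref{log0} comes from the entropy inequality: with $\tt W(s):=W(s)+\int_0^s v(u)\,\d u$ a Brownian motion under $\Q$,
\beq\label{PP1}
P_t\log f(\eta)=\E_\Q[\log f(Y_t)]=\E_\P[R\log f(X_t)]\le \log P_tf(\xi)+\E_\P[R\log R],
\end{equation}
where the second equality uses $Y_t=X_t$ $\P$-a.s.\ together with $\Q\ll\P$, and the inequality is the variational bound $\E_\P[Rg]\le\log\E_\P[\e^g]+\E_\P[R\log R]$ with $g=\log f(X_t)$. Since $\E_\P[R\log R]=\ff12\E_\Q\int_0^t|v(s)|^2\d s$, it remains to bound the control cost: the linear drive on $[0,t-r_0]$ contributes a term of order $\|\xi-\eta\|_\infty^2/(t-r_0)$, while on $[t-r_0,t]$ the bound $|v(s)|\le\lambda_B\|X_s-Y_s\|_\infty$ from \eqref{B-Lip} together with the boundedness in \eqref{bao3} and \eqref{B-Lip} gives a contribution of order $\|\xi-\eta\|_\infty^2$, yielding $\E_\P[R\log R]\le \ff{C(t)}{(t-r_0)\wedge1}\|\xi-\eta\|_\infty^2$, i.e.\ \eqref{log0}. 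The main obstacle is precisely this cost estimate: the H\"older (rather than Lipschitz) regularity of $b$ only gives $|b(Y)-b(X)|\le\kk|\gg|^\aa$ for the cancellation term, so obtaining the clean quadratic dependence on $\|\xi-\eta\|_\infty$ requires the Zvonkin/Dini machinery of \cite{HZ} (reducing to a drift of better regularity before coupling); this is why I would lean on \cite{HZ} for \eqref{log0} rather than redo it.

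The identity and bound \eqref{Ent} then follow from \eqref{log0} by the standard device. First, \eqref{log0} shows $P_t(\xi,\cdot)$ and $P_t(\eta,\cdot)$ are equivalent for $t>r_0$: for measurable $A$ with $P_t(\xi,A)=0$, applying \eqref{log0} to $f=1+n1_A$ gives $\log(1+n)\,P_t(\eta,A)=P_t\log f(\eta)\le\log P_tf(\xi)+\ff{C(t)}{(t-r_0)\wedge1}\|\xi-\eta\|_\infty^2=\ff{C(t)}{(t-r_0)\wedge1}\|\xi-\eta\|_\infty^2$, and letting $n\to\infty$ forces $P_t(\eta,A)=0$; hence $P_t(\eta,\cdot)\ll P_t(\xi,\cdot)$, and exchanging $\xi,\eta$ gives equivalence. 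Next, exchanging $\xi$ and $\eta$ in \eqref{log0} (legitimate as $\|\xi-\eta\|_\infty$ is symmetric) and inserting $f=\ff{\d P_t(\xi,\cdot)}{\d P_t(\eta,\cdot)}$ (first truncated to $f\wedge m\in\B_b^+$, then letting $m\to\infty$), one has $P_tf(\eta)=\int\ff{\d P_t(\xi,\cdot)}{\d P_t(\eta,\cdot)}\,\d P_t(\eta,\cdot)=1$, so $\log P_tf(\eta)=0$, while $P_t\log f(\xi)=\Ent(P_t(\xi,\cdot)|P_t(\eta,\cdot))$; this is exactly the claimed identity, and \eqref{log0} bounds it by $\ff{C(t)}{(t-r_0)\wedge1}\|\xi-\eta\|_\infty^2$, giving \eqref{Ent}.

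Finally, \eqref{TT} is immediate from Pinsker's inequality \eqref{ETX} applied with $\nu=P_t(\xi,\cdot)$ and $\mu=P_t(\eta,\cdot)$:
\[
\|P_t(\xi,\cdot)-P_t(\eta,\cdot)\|_{var}^2\le\ff12\Ent(P_t(\xi,\cdot)|P_t(\eta,\cdot))\le\ff{C(t)}{2[(t-r_0)\wedge1]}\|\xi-\eta\|_\infty^2,
\]
and since $2[(t-r_0)\wedge1]=(2(t-r_0))\wedge2$ this is exactly \eqref{TT}. Thus the only genuinely hard point is \eqref{log0}; everything after it is bookkeeping.
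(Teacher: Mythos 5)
Your proposal is correct and takes essentially the same route as the paper: the paper likewise obtains \eqref{log0} by citing \cite{HZ} (Theorem 2.2 there, specialized to $\mathbb{H}=\mathbb{R}^d$), then deduces \eqref{Ent} and \eqref{TT} by soft arguments, with \eqref{TT} coming from Pinsker's inequality \eqref{ETX} exactly as you do. The only difference is one of packaging: where you sketch the coupling-by-change-of-measure mechanism behind \cite{HZ} and reprove the standard implication from the log-Harnack inequality to equivalence of $P_t(\xi,\cdot)$, $P_t(\eta,\cdot)$ and the entropy identity, the paper simply cites \cite{Wbook} (Theorem 1.4.2) for that implication.
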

\begin{proof} The log-Harnack inequality \eqref{log0} is a known result in \cite[Theorem 2.2]{HZ} with $\mathbb{H}=\mathbb{R}^d$, see also \cite[Theorem 2.4]{HW1} for log-Harnack inequality of the path-distribution dependent SDEs with Dini continuous drift. Combining the definition of  $\Ent(P_{t}(\xi,\cdot)|P_{t}(\eta,\cdot))$ and \cite[Theorem 1.4.2]{Wbook}, we obtain \eqref{Ent}. Finally, \eqref{TT} follows from \eqref{ETX} and \eqref{Ent}.
\end{proof}
\begin{rem} \label{W2} For any $\nu,\tilde{\nu}\in\scr P_2$ and $\pi\in\mathbf{C}(\nu,\tilde{\nu})$, taking  expectation on both sides of \eqref{log0} with respect to $\pi$, we have for any $t>r_0$,
\begin{align*}
\int_{\C^2}P_t\log f(\eta)\pi(\d \xi,\d \eta)\leq \int_{\C^2}\log P_t f(\xi)\pi(\d \xi,\d \eta)+\frac{C(t)}{(t-r_0)\wedge 1}\int_{\C^2}\|\xi-\eta\|_{\infty}^2\pi(\d \xi,\d \eta).
\end{align*}
Jensen's inequality and the definition of $\mathbb{W}_2$ imply that
\beq\label{log1}
(\tilde{\nu}P_t)(\log f)\leq \log (\nu P_t)(f)+\frac{C(t)}{(t-r_0)\wedge 1}\mathbb{W}_2(\nu,\tilde{\nu})^2,\ \ t>r_0.
\end{equation}
Then we have
\begin{align}\label{Ent00}\Ent(\nu P_t|\tilde{\nu}P_{t})\leq  \frac{C(t)}{(t-r_0)\wedge 1}\mathbb{W}_2(\nu,\tilde{\nu})^2, \ \  t>r_0,\end{align}
and
\beq\label{TT00}\|\nu P_t-\tilde{\nu}P_{t}\|_{var}^2\le  \frac{C(t)}{2(t-r_0)\wedge 2}\mathbb{W}_2(\nu,\tilde{\nu})^2, \ \ t>r_0. \end{equation}
\end{rem}
Let $\Gamma(\cdot)$ be the Gamma function. In order to state our main result, we firstly give some notations.

For any $\lambda>0, \delta,\varepsilon\in(0,1)$, let
\begin{equation}\label{z30}
\lambda_0(\delta):=\left(\frac{1+\delta}{\delta}\sqrt{\pi}\|b\|_{\infty}\right)^2 \vee\left(\frac{1+\delta}{\delta}\|b\|_{\infty}\right);
\end{equation}
\begin{equation}\label{z3}\begin{split}
\Upsilon(\lambda,\delta):&=2\Gamma(\frac{\alpha}{2}) \lambda^{-\frac{\alpha}{2}}\left((1+\delta)\kappa+ 2\delta\|b\|_{\infty}+4(1+\delta)(3+\lambda^{-1})\|b\|^2_{\8}\right);
\end{split}\end{equation}
and 
\begin{align*}\Lambda(\lambda,\varepsilon,\delta):&=\frac{2}{1-\varepsilon}\left((1+\delta)\delta\lambda +\beta\delta+(1+\delta)^2\lambda_B+(1+\delta)\Upsilon(\lambda,\delta) \|B\|_{\infty}\right)\\
&\qquad+\frac{1}{1-\varepsilon}\left(d+\frac{4}{(1-\delta)^2\varepsilon}\right)\Upsilon(\lambda,\delta)^2,
\end{align*}
The main result in this paper is the following theorem.
\begin{thm}\label{TE} Assume {\bf(H1)}-{\bf(H2)}, 
then the following assertions hold.
\begin{enumerate}
\item[(1)] The following estimate holds:
\begin{align*}\mathbb{E}\|X_t^\xi-X_t^\eta\|^2_{\infty}&\leq \inf_{\delta,\varepsilon\in(0,1), \lambda> \lambda_0(\delta)}\frac{\e^{\frac{2\beta}{(1-\delta)^2} r_0}}{1-\varepsilon}\exp\left\{\frac{\e^{\frac{2\beta}{(1-\delta)^2} r_0}}{(1-\delta)^2}\left(\Lambda(\lambda,\varepsilon,\delta)-2\beta \e^{-\frac{2\beta}{(1-\delta)^2} r_0}\right)t\right\}\\
&\qquad\qquad\times\|\xi-\eta\|_{\infty}^2, \ \ t>0, \xi,\eta\in\C.
\end{align*}
\item[(2)] If there exists $\tilde{\delta},\tilde{\varepsilon}\in(0,1), \tilde{\lambda}> \lambda_0(\tilde{\delta})$ such that
\begin{align}\label{nl}
\Lambda(\tilde{\lambda},\tilde{\varepsilon},\tilde{\delta})<2\beta \e^{-\frac{2\beta}{(1-\tilde{\delta})^2} r_0},
\end{align}
then \eqref{E1} has a unique invariant probability measure $\mu$  and for any $t_0>r_0$,
\begin{align*}\max\left\{\mathbb{W}_2(P_t(\xi,\cdot),\mu), \Ent(P_{t}(\xi,\cdot)|\mu), \|P_t(\xi,\cdot)-\mu\|_{var}\right\}\leq \kappa_1(\xi)\e^{-\kappa_2 t},\ \ \xi\in\C,t>t_0
\end{align*}
for some constants $\kappa_1(\xi),\kappa_2>0$, here $\kappa_1(\xi)$ means it depends on $\xi$.
\end{enumerate}
\end{thm}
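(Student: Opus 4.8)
The plan is to neutralize the H\"older drift $b$ by Zvonkin's transform and reduce \eqref{E1} to a functional SDE whose drift is Lipschitz in the current state, so that the dissipation of $-\beta x$ can be exploited directly. Fixing $\delta\in(0,1)$ and $\lambda>\lambda_0(\delta)$, I would take the $\R^d$-valued solution $u$ of the resolvent equation $\lambda u-\tfrac12\Delta u+\langle\beta x-b,\nabla u\rangle=b$ (componentwise) and set $\Theta=\mathrm{id}+u$. The estimates of Section 3 — which is precisely what \eqref{z30} and \eqref{z3} encode — should give $\|\nabla u\|_\infty\le\delta$ exactly when $\lambda>\lambda_0(\delta)$, together with $\|\nabla^2u\|_\infty\le\Upsilon(\lambda,\delta)$ (the H\"older constant $\kappa$ of $b$ enters $\Upsilon$ because two derivatives fall on $b$). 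The first bound makes $\Theta$ a bi-Lipschitz diffeomorphism with $(1-\delta)|x-y|\le|\Theta(x)-\Theta(y)|\le(1+\delta)|x-y|$, and the second makes $\nabla\Theta$ Lipschitz with constant $\Upsilon(\lambda,\delta)$; these are the sources of all the $(1\pm\delta)$ and $\Upsilon$ factors in the statement.

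Next I would apply It\^o's formula to $Y(t):=\Theta(X(t))$. Since $u$ solves the resolvent equation the singular term cancels and $Y$ satisfies $\d Y(t)=\{-\beta Y(t)+(\lambda+\beta)u(X(t))+\nabla\Theta(X(t))B(X_t)\}\d t+\nabla\Theta(X(t))\d W(t)$, whose state-dependent drift is now Lipschitz. Running the synchronous coupling $(X^\xi,X^\eta)$ driven by the same $W$ and applying It\^o to $|Y^\xi(t)-Y^\eta(t)|^2$, the leading contribution is the good term $-2\beta|Y^\xi-Y^\eta|^2$, while every remaining summand is controlled by $\|\nabla u\|_\infty\le\delta$, $\|\nabla^2u\|_\infty\le\Upsilon$, $\|B\|_\infty$ and the constant $\lambda_B$ of \eqref{B-Lip}, after converting $|X^\xi-X^\eta|$ to $|Y^\xi-Y^\eta|$ by bi-Lipschitzness. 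Up to the bookkeeping of the $(1\pm\delta)$ weights I expect the summands of $\Lambda(\lambda,\varepsilon,\delta)$ to originate as follows: $(1+\delta)\delta\lambda+\beta\delta$ from the $(\lambda+\beta)u$ term, $(1+\delta)^2\lambda_B$ from the Lipschitz part of $B$, $(1+\delta)\Upsilon\|B\|_\infty$ from the $\nabla\Theta$-increment acting on $B$, the It\^o correction $d\,\Upsilon^2$ from the quadratic variation $\|\nabla\Theta(X^\xi)-\nabla\Theta(X^\eta)\|_{HS}^2$, and $\tfrac{4}{(1-\delta)^2\varepsilon}\Upsilon^2$ from a Burkholder--Davis--Gundy bound on the martingale $\int\langle Y^\xi-Y^\eta,(\nabla\Theta(X^\xi)-\nabla\Theta(X^\eta))\d W\rangle$, whose $\varepsilon$-fraction is absorbed into the left-hand side and explains the factor $\tfrac1{1-\varepsilon}$.

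The delay enters only through $B(X_t)$, which by \eqref{B-Lip} is dominated by $\|X^\xi_t-X^\eta_t\|_\infty^2=\sup_{s\in[t-r_0,t]}|X^\xi(s)-X^\eta(s)|^2$; hence the inequality does not close in the pointwise quantity but couples it to the segment supremum, and this functional Gronwall step is where I expect the real difficulty. Concretely, after the above estimates one reaches a closed inequality of the form $\tfrac{\d}{\d t}\E|Y^\xi(t)-Y^\eta(t)|^2\le -2\beta\,\E|Y^\xi(t)-Y^\eta(t)|^2+\Lambda(\lambda,\varepsilon,\delta)\,\E\|X^\xi_t-X^\eta_t\|_\infty^2$, with the delay carried entirely by the supremum on the right. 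Using $\|X^\xi_t-X^\eta_t\|_\infty\le(1-\delta)^{-1}\|Y^\xi_t-Y^\eta_t\|_\infty$ and comparing across windows of length $r_0$, on each of which the free decay $\e^{-2\beta t}$ can change by at most $\e^{2\beta r_0/(1-\delta)^2}$, should turn the dissipation into the effective exponent $\tfrac{\e^{2\beta r_0/(1-\delta)^2}}{(1-\delta)^2}\big(\Lambda(\lambda,\varepsilon,\delta)-2\beta \e^{-2\beta r_0/(1-\delta)^2}\big)$ and produce the prefactor $\tfrac{\e^{2\beta r_0/(1-\delta)^2}}{1-\varepsilon}$. Since $\delta,\varepsilon\in(0,1)$ and $\lambda>\lambda_0(\delta)$ are free throughout, taking the infimum gives the bound of part (1).

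For part (2), condition \eqref{nl} makes the exponent of part (1) strictly negative, so $\E\|X^\xi_t-X^\eta_t\|_\infty^2\le C_0\e^{-2\kappa_2 t}\|\xi-\eta\|_\infty^2$ for suitable $C_0,\kappa_2>0$ (fix any admissible $\tilde\delta,\tilde\varepsilon,\tilde\lambda$). Because the synchronous coupling gives $\W_2(P_t(\xi,\cdot),P_t(\eta,\cdot))^2\le\E\|X^\xi_t-X^\eta_t\|_\infty^2$, an optimal coupling of the initial data upgrades this to $\W_2(\nu P_t,\tilde\nu P_t)^2\le C_0\e^{-2\kappa_2 t}\W_2(\nu,\tilde\nu)^2$, so $P_{t_1}$ is a strict $\W_2$-contraction on the complete space $(\scr P_2,\W_2)$ once $t_1$ is large; after verifying $P_t\scr P_2\subseteq\scr P_2$ through a moment bound $\sup_{t\ge0}\E\|X_t^\xi\|_\infty^2<\infty$ (coming from \eqref{sup0}, the boundedness of $b,B$ and the dissipativity of $-\beta x$), Banach's fixed point theorem and the semigroup identity produce a unique invariant measure $\mu\in\scr P_2$. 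Convergence in $\W_2$ then follows from $\W_2(P_t(\xi,\cdot),\mu)^2=\W_2(\delta_\xi P_t,\mu P_t)^2\le\int_\C\E\|X^\xi_t-X^\eta_t\|_\infty^2\,\mu(\d\eta)\le C_0\e^{-2\kappa_2 t}\int_\C\|\xi-\eta\|_\infty^2\,\mu(\d\eta)$, the last integral being finite since $\mu\in\scr P_2$. Finally, fixing $t_0>r_0$ and writing $P_t(\xi,\cdot)=(\delta_\xi P_{t-t_0})P_{t_0}$ and $\mu=\mu P_{t_0}$, I would insert this $\W_2$-decay into \eqref{Ent00} and \eqref{TT00} of Remark~\ref{W2}, bounding both $\Ent(P_t(\xi,\cdot)|\mu)$ and $\|P_t(\xi,\cdot)-\mu\|_{var}^2$ by a constant multiple of $\W_2(\delta_\xi P_{t-t_0},\mu)^2$, which decays exponentially; taking $\kappa_2$ to be the slowest of the three resulting rates and enlarging $\kappa_1(\xi)$ accordingly yields the uniform estimate of part (2).
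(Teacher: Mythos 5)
Your part (1) is essentially the paper's own proof: the same Zvonkin transform $\theta^\lambda=\mathrm{id}+\mathbf{u}^\lambda$ built from Lemma \ref{L2.1}, the same synchronous coupling and It\^o computation for $|\theta^\lambda(X(t))-\theta^\lambda(\bar X(t))|^2$, and the same weighted-supremum Gronwall argument (the paper works with $\gamma_r=\sup_{s\in[-r_0,r]}\e^{\frac{2\beta}{(1-\delta)^2}s^+}|X(s)-\bar X(s)|^2$ and uses BDG to absorb an $\varepsilon$-fraction into the left-hand side, exactly as you describe), so your attribution of each summand of $\Lambda(\lambda,\varepsilon,\delta)$ matches the paper's derivation. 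Two small remarks: first, a bookkeeping point — grouping the drift as $-\beta Y+(\lambda+\beta)\mathbf{u}^\lambda$ produces $\beta\delta(1+\delta)$ rather than the stated $\beta\delta$; the paper instead pairs $-\beta(X-\bar X)$ directly against $\theta^\lambda(X)-\theta^\lambda(\bar X)$, which is what yields exactly the constants of $\Lambda$. Second, your ``closed inequality'' should be formulated for the expectation of the running weighted supremum rather than for $\E|Y^\xi(t)-Y^\eta(t)|^2$ pointwise in $t$, since the BDG term you invoke only arises at that stage; you clearly have this mechanism in mind, but as written the two formulations are mixed. For part (2) you take a genuinely different (and equally valid) route to the invariant measure: Banach's fixed point theorem for $P_{t_1}$ on the complete space $(\scr P_2,\W_2)$, using the contraction $\W_2(\nu P_t,\tilde\nu P_t)^2\le C_0\e^{-2\kappa_2 t}\W_2(\nu,\tilde\nu)^2$ obtained by coupling the initial data optimally, whereas the paper shows $t\mapsto P_t(\xi,\cdot)$ is $\W_2$-Cauchy via the flow identity $X^\xi_s=X^{X^\xi_{s-t}}_t$ together with Lemma \ref{supp}, and then proves the limit is independent of $\xi$. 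Your route also simplifies the uniform moment bound: since {\bf(H2)} makes $B$ bounded, $\sup_{t\ge 0}\E\|X^\xi_t\|_\infty^2<\infty$ follows from dissipativity plus boundedness alone, with no need for the smallness condition $\lambda_B<\beta\e^{-2\beta r_0}$ that the paper extracts from \eqref{nl} in order to apply Lemma \ref{supp}; what the paper's construction buys instead is an explicit Cauchy rate, hence the explicit $\kappa_1(\xi)$. The closing step — the semigroup property combined with \eqref{Ent00} and \eqref{TT00} to convert $\W_2$-decay into entropy and total-variation decay — coincides with the paper's.
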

\begin{rem} \label{st} When $b=0$, we have $\lambda_0(\delta)=\Upsilon(\lambda,\delta)=0$, and $$\Lambda(\lambda,\varepsilon,\delta)=\frac{2}{1-\varepsilon}\left((1+\delta)\delta\lambda +\beta\delta+(1+\delta)^2\lambda_B\right).$$ 
In this case, \eqref{nl} holds provided $\lambda_B<\beta \e^{-2\beta r_0}$, which was used in \cite[Lemma 2.4]{BWY150}. In other words, Theorem \ref{TE}(2) covers the result in \cite[Lemma 2.4]{BWY150} with $Z(x)=-\beta x, \sigma=I$ there. On the other hand, there are a lot of examples where $\W_2(\mu_n,\mu_0)$  goes to $0$ as $n$ goes to infinity, but $\mu_n$ is singular with respect to $\mu_0$ such that $\Ent(\mu_n|\mu_0)=\infty$ and $\|\mu_n-\mu_0\|_{var}=1.$ Thus, the assertion in Theorem \ref{TE}(2) is not trivial.
\end{rem}
\begin{rem}\label{ex} From the proof of Theorem \ref{TE} below, we can extend \eqref{E1} to the following SDE
\beq\label{E1''0} \d X(t)=A X(t)\d t+\{b(X(t))+B(X_t)\}\d t+\sigma\d W(t),\end{equation}
\end{rem}
where $A,\sigma\in\mathbb{R}^d\otimes\mathbb{R}^d$, $A$ is negative definite and self-joint, and $\sigma$ is inverse. Since in this case, $\Lambda(\lambda,\varepsilon,\delta)$, $\Upsilon(\lambda,\delta)$ are more sophisticated, we do not give them here.  
\section{Precise Estimate for Zvonkin's Transform}
Since $b$ is singular, we need to construct a regular transform to remove $b$. To this end, for any $\lambda>0$, consider the following equation
\begin{equation}\label{b2}
u=\int_0^\infty\e^{-\lambda t}P_{t}^0\{b+\nabla_{b}u\}\d t,
\end{equation}
where the semigroup $(P_{t}^0)_{t>0}$ is generated by $(Z_t^{x})_{t\geq 0}$ which solves the SDE
\begin{equation}\label{b0}
\d Z_t^{x}=-\beta Z_t^x+\d W(t), \ \ Z_0^{x}=x.
\end{equation}
The following lemma gives a precise estimate for the solution to \eqref{b2}, and it is very important in the proof of the exponential convergence.
\begin{lem}\label{L2.1}
 {\rm Under {\bf (H1)}, for any $\delta\in(0,1)$, there exists a constant $\lambda_0(\delta)>0$ defined in \eqref{z30} such that for any $ \lambda>\lambda_0(\delta)$, the following assertions hold. 
\begin{enumerate}
\item[\bf{(i)}]  The equation \eqref{b2} has a unique
strong solution $\mathbf{u}^\lambda\in C_b^1(\R^d;\R^d)$;
\item[\bf{(ii)}]
$\|\nn \mathbf{u}^\lambda\|_{\infty}\le \delta$;
\item[\bf{(iii)}]
$ \|\nn^2
\mathbf{u}^\lambda\|_{\8}\le\Upsilon(\lambda,\delta)$ with $\Upsilon(\lambda,\delta)$ defined in \eqref{z3}.
\end{enumerate}
 }
\end{lem}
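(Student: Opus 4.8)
The plan is to realize \eqref{b2} as a fixed-point equation $\mathbf u=\Phi(\mathbf u)$ for the map
\[
\Phi(u):=\int_0^\8\e^{-\lambda t}P_t^0\{b+\nn_b u\}\,\d t,\qquad \nn_b u:=(\nn u)\,b,
\]
on the Banach space $C_b^1(\R^d;\R^d)$, to solve it by the contraction principle, and then to read off (ii)--(iii) by inserting the fixed point back into sharpened smoothing estimates for the Ornstein--Uhlenbeck semigroup $(P_t^0)$. Writing $Z_t^x=\e^{-\beta t}x+\sigma_t\xi$ with $\xi\sim N(0,I_d)$ and $\sigma_t^2=\ff{1-\e^{-2\beta t}}{2\beta}$, the whole argument rests on the elementary facts $\ff{\e^{-2\beta t}}{\sigma_t^2}=\ff{2\beta}{\e^{2\beta t}-1}\le\ff1t$ and $\sigma_t^2\le t$, together with the Gamma integrals $\int_0^\8\e^{-\lambda t}t^{-1/2}\d t=\ss\pi\,\lambda^{-1/2}$ and $\int_0^\8\e^{-\lambda t}t^{\alpha/2-1}\d t=\GG(\ff\alpha2)\lambda^{-\alpha/2}$; these already explain the $\ss\pi$ in $\lambda_0(\delta)$ and the prefactor $\GG(\ff\alpha2)\lambda^{-\alpha/2}$ in $\Upsilon(\lambda,\delta)$.

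First I would record the required bounds on $P_t^0$. For bounded measurable $f$, Bismut-type Gaussian integration by parts gives $\nn_w P_t^0 f(x)=\ff{\e^{-\beta t}}{\sigma_t}\E[f(Z_t^x)\langle\xi,w\rangle]$ and $\nn_v\nn_w P_t^0 f(x)=\ff{\e^{-2\beta t}}{\sigma_t^2}\E[f(Z_t^x)(\langle\xi,v\rangle\langle\xi,w\rangle-\langle v,w\rangle)]$, whence $\|\nn P_t^0 f\|_\8\le t^{-1/2}\|f\|_\8$ and $\|\nn^2 P_t^0 f\|_\8\le C t^{-1}\|f\|_\8$. When $f$ is $\alpha$-Hölder one subtracts the value of $f$ at the base point $\e^{-\beta t}x$ (legitimate because the Gaussian weights above have mean zero) and uses $[f]_\alpha$ on the increment; this produces the crucial smoothing gain
\[
\|\nn^2 P_t^0 f\|_\8\le C\, t^{\alpha/2-1}[f]_\alpha .
\]
The one subtlety here, which keeps the constants in $\Upsilon$ \emph{dimension free}, is to decompose $\xi$ along the at most two differentiation directions $\mathrm{span}(v,w)$ and condition on the orthogonal part, so that the Hölder increment is measured by a one- or two-dimensional Gaussian displacement only; the surviving Gaussian moments then do not grow with $d$.

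Next I would establish (i) and (ii) together. Since $\Phi(u_1)-\Phi(u_2)=\int_0^\8\e^{-\lambda t}P_t^0\big((\nn u_1-\nn u_2)b\big)\d t$, the first-order estimate gives $\|\nn(\Phi u_1-\Phi u_2)\|_\8\le\ss{\pi/\lambda}\,\|b\|_\8\,\|\nn u_1-\nn u_2\|_\8$ and, trivially, $\|\Phi u_1-\Phi u_2\|_\8\le\lambda^{-1}\|b\|_\8\|\nn u_1-\nn u_2\|_\8$; thus $\Phi$ maps $C_b^1$ into itself and is a contraction in the $C_b^1$-norm whenever $(\ss{\pi/\lambda}+\lambda^{-1})\|b\|_\8<1$, which for $\delta\in(0,1)$ holds as soon as $\lambda>\lambda_0(\delta)$ (the squared member of the maximum in \eqref{z30} controlling the gradient part, the linear member the sup-norm part). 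Banach's theorem then yields the unique $\mathbf u^\lambda\in C_b^1$, which is (i). Inserting $\mathbf u^\lambda=\Phi(\mathbf u^\lambda)$ into the first-order estimate and using $\|b+\nn_b\mathbf u^\lambda\|_\8\le\|b\|_\8(1+\|\nn\mathbf u^\lambda\|_\8)$ gives $\|\nn\mathbf u^\lambda\|_\8\le a(1+\|\nn\mathbf u^\lambda\|_\8)$ with $a:=\ss{\pi/\lambda}\,\|b\|_\8$, i.e. $\|\nn\mathbf u^\lambda\|_\8\le\ff a{1-a}$; and \eqref{z30} is precisely the threshold making $\ff a{1-a}\le\delta$, which is (ii).

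Finally, for (iii) I would apply the Hölder second-order bound to $g:=b+\nn_b\mathbf u^\lambda$, giving $\|\nn^2\mathbf u^\lambda\|_\8\le 2\GG(\ff\alpha2)\lambda^{-\alpha/2}[g]_\alpha$, and then bound $[g]_\alpha$. From $[\,gh\,]_\alpha\le\|g\|_\8[h]_\alpha+\|h\|_\8[g]_\alpha$, $[b]_\alpha\le\kappa$ and (ii) one gets $[g]_\alpha\le(1+\delta)\kappa+2\delta\|b\|_\8+\|b\|_\8[\nn\mathbf u^\lambda]_\alpha$, so everything reduces to the Hölder seminorm of $\nn\mathbf u^\lambda$. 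The hard part is to control this \emph{without} reintroducing $\|\nn^2\mathbf u^\lambda\|_\8$: starting from $\nn\mathbf u^\lambda=\int_0^\8\e^{-\lambda t}\nn P_t^0 g\,\d t$ and the two bounded-$g$ estimates $\|\nn P_t^0 g\|_\8\le t^{-1/2}\|g\|_\8$ and $\|\nn^2 P_t^0 g\|_\8\le Ct^{-1}\|g\|_\8$, one interpolates $|\nn P_t^0 g(x)-\nn P_t^0 g(y)|\le C\|g\|_\8\min\{|x-y|/t,\ t^{-1/2}\}$ and integrates in $t$, obtaining $[\nn\mathbf u^\lambda]_\alpha\le 4(1+\delta)(3+\lambda^{-1})\|b\|_\8$ after $\|g\|_\8\le(1+\delta)\|b\|_\8$; substituting reproduces the three bracketed terms of \eqref{z3} and hence $\|\nn^2\mathbf u^\lambda\|_\8\le\Upsilon(\lambda,\delta)$. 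This routing of $[g]_\alpha$ through $[\nn\mathbf u^\lambda]_\alpha$, forced by $b$ being only Hölder and closed using only $\|\nn\mathbf u^\lambda\|_\8$ and $\|g\|_\8$ (whence the quadratic $\|b\|_\8^2$), while simultaneously keeping all constants dimension free, is the genuine obstacle of the proof.
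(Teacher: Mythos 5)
Your proof is correct and, at the level of architecture, it is the same as the paper's: the contraction map on $C_b^1(\R^d;\R^d)$ with the same threshold \eqref{z30} for (i); the self-bounding inequality $\|\nn\mathbf{u}^\lambda\|_\8\le a(1+\|\nn\mathbf{u}^\lambda\|_\8)$ with $a=\ss{\pi/\lambda}\,\|b\|_\8$ for (ii); and for (iii) the same reduction of $\|\nn^2\mathbf{u}^\lambda\|_\8$ to the H\"older seminorm of $g=b+\nn_b\mathbf{u}^\lambda$ via the $t^{\alpha/2-1}$-smoothing bound, with $[g]_\alpha$ closed through the interpolated estimate $C\|g\|_\8\min\{|x-y|/t,\,t^{-1/2}\}$ and the dichotomy $|x-y|\ge1$ versus $|x-y|\le1$, reproducing exactly the three terms of \eqref{z3}. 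The differences lie in how the semigroup estimates are produced, and one of them matters. You derive the first- and second-order derivative bounds from explicit Gaussian (Mehler) integration by parts, whereas the paper uses the Bismut formula \eqref{h0} together with the splitting $P_t^0=P_{t/2}^0P_{t/2}^0$ to reach \eqref{be}; for this Ornstein--Uhlenbeck semigroup the two computations are interchangeable and neither buys anything over the other. More substantially, your conditioning of $\xi$ on the orthogonal complement of ${\rm span}(v,w)$, so that the H\"older increment is paid only by an at most two-dimensional Gaussian displacement, is not a cosmetic refinement: the paper's corresponding step \eqref{v1} bounds $\E|h(Z_t^{x})-h(\e^{-\beta t}x)|^2$ by Jensen's inequality and It\^o's isometry, and for a $d$-dimensional Brownian motion $\E|Z_t^x-\e^{-\beta t}x|^2=d\,(1-\e^{-2\beta t})/(2\beta)\le dt$, so \eqref{v1} as written silently acquires a factor $d^{\tilde\alpha}$ and the paper's $\Upsilon(\lambda,\delta)$ should carry $d^{\alpha/2}$; your projection argument removes this and actually delivers the dimension-free constant that the Lemma asserts, so on this point your route is the more careful one. (One shared blemish: both you and the paper bound $|x-y|^{1-\alpha}\log(\e+|x-y|^{-2})$ by $1$ on $\{|x-y|\le1\}$, while its supremum there is at least $\log(\e+1)>1$, so the last term of $\Upsilon$ is off by a harmless absolute constant in both arguments.)
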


\begin{proof} Firstly, it is easy to see that $$\nn_\eta
Z_t^{x}=\e^{- \beta t}\eta, \ \ \nn_{\eta'}\nn_\eta
Z_t^{x}=0,\ \ t\geq 0. $$
This combining the Bismut formula \cite[(2.8)]{W} implies that
\begin{equation}\label{h0}\begin{split}
\nn_\eta
P_{t}^0f(x)&=\E\Big(\frac{f(Z_t^{x})}{t}\int_0^t\<\nn_\eta
Z_r^{x},\d W(r)\>\Big)\\
&=\E\Big(\frac{f(Z_t^{x})}{t}\int_0^t\<\e^{-\beta r}\eta,\d W(r)\>\Big),~~~f\in\B_b(\R^d), t>0.
\end{split}\end{equation}
By the Cauchy-Schwartz inequality and It\^o's isometry, we obtain that
\begin{equation}\label{b7}
\begin{split}
|\nn_\eta
P_{t}^0f|^2(x)&\le
\ff{|\eta|^2P_{t}^{0}f^{2}(x)}{t},~~~f\in\B_b(\R^d), t>0.
\end{split}
\end{equation}
{\bf(i)} Let $\scr H=C_{b}^{1}(\mathbb{R}^d; \mathbb{R}^d))$, which is a Banach space under the norm
\begin{equation*}\begin{split}
\|u\|_{\scr H}:&=\|u\|_{\infty}+\|\nabla u\|_{\infty}.
\end{split}\end{equation*}
For any $\lambda>0$, $u\in\scr H$, define
\begin{equation*}
(\Gamma^\lambda u)(x)=\int_{0}^{\infty} \e^{-\lambda t}P_{t}^{0}(\nabla_{b}u+b)(x)\d t.
\end{equation*}
Then we claim $\Gamma^\lambda\scr H\subset\scr H$ for any $\lambda>0$. In fact, for any $u\in\scr H$, it holds that
\begin{equation*}\beg{split}
\|\Gamma^\lambda u\|_{\infty}&=\sup_{x\in\mathbb{R}^d}\left|\int_{0}^{\infty} \e^{-\lambda t}P_{t}^{0}(\nabla_{b}u+b)(x)\d t\right|\\
&\leq (\|b\|_{\infty}\|\nabla u\|_{\infty}+\|b\|_{\infty})\int_{0}^{\infty} \e^{-\lambda t}\d t\\
&\leq \frac{\|b\|_{\infty}\|\nabla u\|_{\infty}+\|b\|_{\infty}}{\lambda}<\infty.
\end{split}\end{equation*}
By \eqref{b7}, we have
\begin{equation}\beg{split}\label{gu}
\|\nabla\Gamma^\lambda u\|_{\infty}&=\sup_{x\in\mathbb{R}^d, |\eta|\leq 1}\left|\int_{0}^{\infty} \e^{-\lambda t}\nabla_{\eta} P_{t}^{0}(\nabla_{b}u+b)(x)\d t\right|\\
&\leq \int_{0}^{\infty} \frac{\e^{-\lambda t}}{\sqrt{t}}(\|b\|_{\infty}\|\nabla u\|_{\infty}+\|b\|_{\infty})\d t\\
&\leq \|(\|b\|_{\infty}\|\nabla u\|_{\infty}+\|b\|_{\infty})\int_{0}^{\infty} \frac{\e^{-\lambda t}}{\sqrt{t}}\d t\\
&\leq \frac{\sqrt{\pi}(\|b\|_{\infty}\|\nabla u\|_{\infty}+\|b\|_{\infty})}{\sqrt{\lambda}}<\infty.
\end{split}\end{equation}
So, $\Gamma^\lambda\scr H\subset\scr H$ for any $\lambda>0$. Next, by the fixed-point theorem, it suffices to show that for large enough $\lambda>0$, $\Gamma^\lambda$ is contractive on $\scr H$. To do this, for any $u$, $\tilde{u}\in\scr H$, similarly to the estimates of $\|\Gamma^\lambda u\|_\infty$ and $\|\nabla \Gamma^\lambda u\|_\infty$ above, we obtain that
\begin{equation}\beg{split}\label{u-nu}
&\|\Gamma^\lambda u-\Gamma^\lambda \tilde{u}\|_{\infty}\leq \frac{\|b\|_{\infty}}{\lambda}\|\nabla u-\nabla \tilde{u}\|_{\infty},\\
&\|\nabla(\Gamma^\lambda u-\Gamma^\lambda \tilde{u})\|_{\infty}\leq \frac{\sqrt{\pi}\|b\|_{\infty}}{\sqrt{\lambda}}\|\nabla u-\nabla \tilde{u}\|_{\infty}.
\end{split}\end{equation}
Taking $\lambda>0$ satisfying
\begin{align}\label{lam}
\lambda>\lambda_0(\delta)=\left(\frac{1+\delta}{\delta}\sqrt{\pi}\|b\|_{\infty}\right)^2 \vee\left(\frac{1+\delta}{\delta}\|b\|_{\infty}\right),
 \end{align}
 then $\Gamma^\lambda$ is contractive on $\scr H$, which implies that \eqref{b2} has a unique solution $\mathbf{u}^\lambda\in C_{b}^{1}(\mathbb{R}^d; \mathbb{R}^d))$ by the fixed-point theorem.
Moreover,  by \eqref{gu} and \eqref{lam}, {\bf(i)} holds for $\lambda>\lambda_0(\delta)$.

{\bf(ii)} For any $\lambda>\lambda_0(\delta)$, one infers from \eqref{b2} and \eqref{b7} that
\begin{equation*}
\begin{split}
\|\nn \mathbf{u}^\ll\|_\infty&\le\int_0^\infty\e^{-\ll t}\|\nn
P_{t}^0\{b+\nabla_{b}\mathbf{u}^\ll\}\|_\infty\d t\\
&\le\|(1+\|\nn \mathbf{u}^\lambda\|_{\8})\|b\|_{\8}
\int_0^\infty\frac{\e^{-\ll t}}{\ss{t}}\d t\\
&\le\,\lambda^{-\frac{1}{2}}\ss\pi\|b\|_{\8}(1+\|\nn
\mathbf{u}^\lambda\|_{\8}).
\end{split}
\end{equation*}
This and \eqref{lam} yield {\bf(ii)}.

In the sequel, we intend to verify {\bf(iii)}. From \eqref{h0} and  the semigroup property, we have
\begin{equation*}
\begin{split}
  \nn_\eta P_{t}^0f(x)&=\nn_\eta P_{t/2}^0(P_{t/2}^0f)(x) \\
&=\E\bigg(\frac{(
P_{t/2}^0f)(Z^{x}_{t/2})}{t/2}\int_0^{\frac{t}{2}}\<\e^{-\beta r}\eta,\d W(r)\>\bigg), \ \ t>0, f\in\B_b(\mathbb{R}^d).
\end{split}
\end{equation*}
This further gives that
\begin{equation*}
\begin{split}
&\frac{1}{2}(\nn_{\eta'}\nn_\eta
P_{t}^0f)(x)=\E\bigg(\frac{(\nn_{\e^{-\beta t/2}\eta'}
P_{t/2}^0f)(Z^{x}_{t/2})}{t}\int_0^{t/2}\<\e^{-\beta r}\eta,\d W(r)\>\bigg),\ \ t>0, f\in\B_b(\mathbb{R}^d),
\end{split}
\end{equation*}
where we have used $\nabla_{\eta'}Z_{t/2}^x=\e^{-\beta t/2}\eta'$.
Thus, applying  Cauchy-Schwartz's inequality and  It\^o's isometry and taking \eqref{b7} into consideration, we derive  that
\begin{equation}\label{be}
\begin{split}
|\nn_{\eta'}\nn_\eta P_{t}^0f|^2(x)
&\le\frac{4}{t}|\eta|^2\ff{|\eta'|^2P_{t}^{0}f^{2}(x)}{t}\\
&=\ff{4|\eta|^2|\eta'|^2P_{t}^{0}f^{2}(x)}{t^2},\ \ t>0, f\in\B_b(\mathbb{R}^d).
\end{split}
\end{equation}
\smallskip

Set $\tt h(\cdot):=h(\cdot)-h(\e^{-\beta t}x)$ for fixed $x\in\R^d$ and $h\in\B_b(\R^d)$ which verifies
\begin{equation}\label{a1}
|h(y')-h(y)|\le\tilde{\kappa}|y'-y|^{\tilde{\alpha}},~~~~y',y\in\R^d
\end{equation}
for some $\tilde{\kappa}>0$ and $\tilde{\alpha}\in(0,1)$. Then \eqref{be} implies that
\begin{equation}\label{v1}
\begin{split}
|\nn_{\eta'}\nn_\eta P_{t}^0h|^2(x)=|\nn_{\eta'}\nn_\eta P_{t}^0\tt h|^2(x)& \le
\frac{4|\eta|^{2}|\eta'|^{2}}{t^2}\E|h(Z_t^{x})-h(\e^{-\beta t}x)|^2\\
&\le\frac{4|\eta|^{2}|\eta'|^{2}}{t^2}\tilde{\kappa}^2t^{\tilde{\alpha}}, \ \ t>0.
\end{split}
\end{equation}
where in the second display  we have used that
\begin{equation*}
Z^{x}_t-\e^{-\beta t}x=\int_0^t\e^{-\beta(t-r)}\d W(r),
\end{equation*}
and utilized Jensen's inequality as well as It\^o's isometry. Thus, if we can prove that
\begin{align}\label{ho}
|(b+\nabla_b \mathbf{u}^\lambda)(x)-(b+\nabla_b \mathbf{u}^\lambda)(y)|\leq \tilde{\kappa}|x-y|^{\tilde{\alpha}},~~~~x,y\in\R^d
\end{align}
for some $\tilde{\kappa}>0$ and $\tilde{\alpha}\in(0,1)$, we get from \eqref{b2} and \eqref{v1} that
\begin{equation} \label{2gra}\begin{split}\|\nn^2\mathbf{u}^\lambda\|_{\8}&\le2\tilde{\kappa}\int_0^\infty\frac{\e^{-\lambda t}}{t}t^{\frac{\tilde{\alpha}}{2}}\d t\\
&=2\tilde{\kappa}\lambda^{-\frac{\tilde{\alpha}}{2}}\int_0^\infty\frac{\e^{-t}}{t^{1-\frac{\tilde{\alpha}}{2}}}\d t\\
&=2\tilde{\kappa}\Gamma(\frac{\tilde{\alpha}}{2})\lambda^{-\frac{\tilde{\alpha}}{2}}.
\end{split}\end{equation}

In the remaining, we intend to prove \eqref{ho}. Combining \eqref{b7} and \eqref{be}, we arrive at
\begin{equation}\label{n-n}\begin{split}
&\|\nabla P_t^0f(x)-\nabla P_t^0f(y)\|\\
&\leq \left(\ff{2|x-y|}{t}\wedge\ff{2}{\sqrt{t}}\right)\|f\|_{\infty}, \ \ f\in\B_b(\mathbb{R}^d), t>0, x,y\in\mathbb{R}^d.
\end{split}\end{equation}
Thus, for any $f\in\B_b(\mathbb{R}^d)$, $\lambda>0, x,y\in\mathbb{R}^d,$ it holds that
\begin{equation}\label{n-n'}\begin{split}
&\left\|\int_0^\infty\e^{-\lambda t}\left(\nabla P_t^0f(x)-\nabla P_t^0f(y)\right)\d t\right\|\\
&\leq 4\|f\|_{\infty}\int_0^\infty\e^{-\lambda t}\left(\ff{|x-y|}{t}\wedge\ff{1}{\sqrt{t}}\right)\d t\\
&\leq 4\|f\|_{\infty}\Bigg(\int_0^{|x-y|^2\wedge \e^{-1}}\ff{1}{\sqrt{t}}\d t+|x-y|\int_{|x-y|^2\wedge \e^{-1}}^1\ff{1}{t}\d t+|x-y|\int_{1}^\infty\e^{-\lambda t}\d t\Bigg)\\
&\leq 4\|f\|_{\infty}\left((2+\lambda^{-1})|x-y|+|x-y|\log(|x-y|^{-2}\vee \e)\right)\\
&\leq 4\|f\|_{\infty}(3+\lambda^{-1})|x-y|\log(|x-y|^{-2}+ \e).
\end{split}\end{equation}
 For any $ \lambda>\lambda_0(\delta),$ note from \eqref{b-in}, {\bf(ii)}, \eqref{b2}, \eqref{b7}, \eqref{be} and \eqref{n-n'} that
\begin{align*}
&|(b+\nabla_b \mathbf{u}^\lambda)(x)-(b+\nabla_b \mathbf{u}^\lambda)(y)|\\
&\le(1+\|\nn \mathbf{u}^\lambda\|_{\8})\kappa|x-y|^\alpha+\|b\|_{\8}\|\nn
\mathbf{u}^\ll(x)-\nn \mathbf{u}^\lambda(y)\|{\bf \bf{1}}_{\{|x-y|\ge1\}}\\
&\quad+\|b\|_{\8}\|\nn \mathbf{u}^\ll(x)-\nn \mathbf{u}^\lambda(y)\|{\bf{1}}_{\{|x-y|\le1\}}\\
&\le(1+\delta)\kappa|x-y|^\alpha+2\delta\|b\|_{\8} |x-y|^\alpha{\bf{1}}_{\{|x-y|\ge1\}}\\
&\quad+4(1+\delta)(3+\lambda^{-1})\|b\|_{\8}^2|x-y|^\alpha\times|x-y|^{1-\alpha}\log\Big(\e+\ff{1}{|x-y|^2}\Big){\bf{1}}_{\{|x-y|\le1\}}\\
&\le\left((1+\delta)\kappa+2\delta\|b\|_{\infty}+4(1+\delta)(3+\lambda^{-1})\|b\|^2_{\8}\right)|x-y|^\alpha,\\
\end{align*}
where in the third inequality we have used the fact that the function $[0,1]\ni x\mapsto x^{1-\alpha}\log(\e+\ff{1}{x^2})$ is non-decreasing.
Thus, \eqref{ho} holds for $\tilde{\kappa}=(1+\delta)\kappa+2\delta\|b\|_{\infty}+4(1+\delta)(3+\lambda^{-1})\|b\|^2_{\8}$ and $\tilde{\alpha}=\alpha$. From \eqref{2gra}, we finish the proof.
\end{proof}
\begin{rem} \label{gra} In the multiplicative noise case, precise estimate for $\|\nabla^2\mathbf{u}^\lambda\|_\infty$ is sophisticated, since $\nabla_\eta Z^x_t$ is a random variable. Thus, we only consider the the case where diffusion coefficient is identity operator in this paper.
\end{rem}
\section{Proof of Theorem \ref{TE}}
\begin{lem}\label{supp} Assume {\bf(H1)}-{\bf(H2)} and
$\lambda_B<\beta \e^{-2\beta r_0},$ then
\beq\label{E1''}\beg{split}
\sup_{t\geq 0}\E \|X^\xi_t\|^2_{\infty}<\infty\end{split}
\end{equation}
\end{lem}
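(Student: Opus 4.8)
The plan is to obtain a uniform-in-time bound on $\E\|X_t\|_\8^2$ by a Gronwall/Lyapunov--Krasovskii argument driven by the dissipation $-\beta X(t)$, treating the delay term $B(X_t)$ through its Lipschitz bound in {\bf(H2)}. A useful preliminary observation is that, because $b$ is bounded by {\bf(H1)}, there is no need to remove it by Zvonkin's transform for this particular estimate: the term $\<X(t),b(X(t))\>$ can simply be absorbed by Young's inequality at the cost of an additive constant, so the H\"older irregularity of $b$ is irrelevant here. (If one prefers to stay within the machinery of Lemma \ref{L2.1}, one may instead pass to $Y(t)=X(t)+\mathbf{u}^\lambda(X(t))$, using $\|\nn\mathbf{u}^\lambda\|_\8\le\delta$ and $\|\mathbf{u}^\lambda\|_\8<\8$; this only changes the constants and leaves the structure below unchanged.) The finite-horizon bound \eqref{sup0} already controls $\E\|X_t\|_\8^2$ on any interval $[0,T]$, so the only issue is uniformity as $t\to\8$.

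First I would apply It\^o's formula to $|X(t)|^2$. Using $\<X(t),-\beta X(t)\>=-\beta|X(t)|^2$, the bound $2\<X(t),b(X(t))\>\le\ee|X(t)|^2+\ee^{-1}\|b\|_\8^2$, the quadratic-variation contribution $d$, and the Lipschitz estimate
$$2\<X(t),B(X_t)\>\le 2\lambda_B|X(t)|\,\|X_t\|_\8+2|B(0)|\,|X(t)|,$$
I would take expectations and apply Cauchy--Schwarz to the delayed cross term, arriving at a differential inequality of the form
$$\ff{\d}{\d t}\E|X(t)|^2\le -(2\beta-\lambda_B-\ee')\,\E|X(t)|^2+\lambda_B\,\E\|X_t\|_\8^2+C,$$
where $\ee'$ can be made small and $C$ is a uniform constant depending only on $\|b\|_\8,|B(0)|,\ee,d$. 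The essential feature is that the only feedback from the past enters through $\E\|X_t\|_\8^2$, with a coefficient that is \emph{linear} in $\lambda_B$.

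The decisive step is to close this delay inequality, and this is where the hypothesis $\lambda_B<\beta\e^{-2\beta r_0}$ enters. The quantity $\E\|X_t\|_\8^2=\E\sup_{s\in[t-r_0,t]}|X(s)|^2$ must be estimated against the current pointwise second moment. Writing each $X(s)$, $s\in[t-r_0,t]$, through the variation-of-constants formula started at $t-r_0$, and bounding the supremum of the Ornstein--Uhlenbeck stochastic convolution over a window of fixed length $r_0$ by the Burkholder--Davis--Gundy inequality (whose second moment is bounded uniformly in $t$), one controls $\E\|X_t\|_\8^2$ by $\E|X(t-r_0)|^2$ plus a uniform additive constant. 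Here the look-back over the window of length $r_0$, weighted against the decay rate $2\beta$, produces precisely the amplification factor $\e^{2\beta r_0}$, so that the delayed term effectively contributes with rate $\sim\lambda_B\e^{2\beta r_0}$. Choosing the Young weights as in the constants $\lambda_0,\Upsilon,\Lambda$ specialized to $b=0$ (cf. Remark \ref{st}), the net exponential rate becomes governed by $2\beta\e^{-2\beta r_0}-(\text{terms}\propto\lambda_B)$, which is made strictly positive exactly by $\lambda_B<\beta\e^{-2\beta r_0}$. A Gronwall comparison then yields $\sup_{t\ge0}\E|X(t)|^2<\8$, and the window estimate upgrades this to $\sup_{t\ge0}\E\|X_t\|_\8^2<\8$, which is \eqref{E1''}.

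The main obstacle I anticipate is the interchange between the pointwise moment $\E|X(t)|^2$ and the segment-supremum moment $\E\|X_t\|_\8^2$: since $\E\sup\ge\sup\E$ and the delay couples the two quantities across a window of length $r_0$, one cannot simply iterate the pointwise inequality. The hypothesis $\lambda_B<\beta\e^{-2\beta r_0}$ is exactly what compensates for the $\e^{2\beta r_0}$ inflation incurred when the backward window of length $r_0$ is compared against the current time through the dissipation rate $\beta$; this is also why the very same threshold resurfaces in Remark \ref{st} as the $b=0$ specialization of the stability condition \eqref{nl}. The remaining care is purely bookkeeping: keeping the auxiliary constants uniform in $t$ and verifying that the additive contributions from $\|b\|_\8$, $|B(0)|$, and the stochastic convolution remain bounded as $t\to\8$.
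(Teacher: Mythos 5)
Your overall strategy is genuinely different from the paper's, and it can be made to work, but the decisive step is justified by an incorrect mechanism and needs repair before it closes. The paper never splits the argument into a pointwise moment inequality plus a window estimate: it works directly with the weighted running supremum $\eta_r=\sup_{s\in[-r_0,r]}\e^{(2\beta-\epsilon)s^+}|X(s)|^2$, applies the BDG inequality once to the supremum of the stochastic integral, and runs Gronwall on $\E\eta_r$. In that scheme the factor $\e^{(2\beta-\epsilon)r_0}$ arises from the comparison $\e^{(2\beta-\epsilon)t}\|X_t\|_\infty^2\le\e^{(2\beta-\epsilon)r_0}\eta_t$ (the segment reaches back $r_0$ units of exponential weight), and the hypothesis $\lambda_B<\beta\e^{-2\beta r_0}$ enters solely to make the net Gronwall exponent $\ff{2\lambda_B\e^{(2\beta-\epsilon)r_0}}{1-\varepsilon}-(2\beta-\epsilon)$ negative. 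Your two-step scheme (pointwise inequality with delay feedback, closed by a variation-of-constants window estimate and a Halanay-type comparison) is a legitimate, more modular alternative; the paper's one-shot scheme has the advantage of producing the explicit decay rate that is reused in the proof of Theorem \ref{TE}.

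The gap is in your accounting of constants at the closure. In your window estimate the kernels satisfy $\e^{-\beta(s-u)}\le 1$ for $t-r_0\le u\le s\le t$, so there is no amplification by $\e^{2\beta r_0}$ at all; that factor is an artifact of the paper's weighted-supremum method and simply does not occur in yours. What your closure actually requires is control of the multiplicative constant $A$ in $\E\|X_t\|_\infty^2\le A\,\E|X(t-r_0)|^2+C$: combined with your pointwise inequality, the Halanay condition reads roughly $A\lambda_B<2\beta-\lambda_B$, and the crude constant $A=3$ (from squaring the triangle inequality) would force $\lambda_B<\beta/2$, which is \emph{not} implied by $\lambda_B<\beta\e^{-2\beta r_0}$ when $\beta r_0$ is small (the hypothesis then permits $\lambda_B$ arbitrarily close to $\beta$). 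So you must choose Young weights so that $A\le 1+\epsilon$. A second soft spot: if inside the window you bound $B$ by its Lipschitz estimate, the drift contribution $\lambda_B\int_{t-r_0}^t\|X_u\|_\infty\,\d u$ reaches back to $t-2r_0$ and the window estimate does not close as stated; you should instead invoke $\|B\|_\infty<\infty$ from {\bf(H2)} there. Once you do, note that the same bound can be used in your pointwise inequality, $2\<X(t),B(X_t)\>\le\epsilon|X(t)|^2+\epsilon^{-1}\|B\|_\infty^2$, after which $\sup_{t\ge0}\E|X(t)|^2<\infty$ holds with no condition on $\lambda_B$ whatsoever and the window estimate upgrades this to \eqref{E1''}. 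In other words, under {\bf(H1)}--{\bf(H2)} as stated, the hypothesis $\lambda_B<\beta\e^{-2\beta r_0}$ is needed only by arguments (the paper's, and your step as written) that route the delay term through the Lipschitz constant rather than through $\|B\|_\infty$; your proof is repairable along either line, but as written the decisive closure is not verified.
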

\begin{proof}
For simplicity, we denote $X^\xi(t)$ by $X(t)$. It\^{o}'s formula implies that
%\begin{align*}
%\d|\eta(t)|^2
%=&2\lambda\left<\eta(t),\mathbf{u}^{\lambda}(X(t))\right\>\d t\\
%&+2\left\<\eta(t),(\nabla\theta\sigma)(X(t))\,\d W(t)\right\>\\
%&+2\left\<\eta(t),\nabla\theta(X(t))B(X_t)\right\>\d t\\
%&+\left\|(\nabla\theta\sigma)(X(t))\right\|^2_{HS}\,\d t\\
%\leq &c(\lambda)|\eta(t)|\d t\\
%&+2\left\<X(t)+\mathbf{u}^{\lambda}(X(t),(\nabla\theta\sigma)(X(t))\,\d W(t)\right\>\\
%&+2\left\<X(t)+\mathbf{u}^{\lambda}(X(t),B(X_t)+\nabla\mathbf{u}^{\lambda}(X(t))B(X_t)\right\>\d t\\
%&+\left\|(\nabla\theta\sigma)(X(t))\right\|^2_{HS}\,\d t\\
%\end{align*}
\begin{align*}
\d|X(t)|^2
=&2\left<X(t),-\beta X(t))\right\>\d t+2\left<X(t),b(X(t))\right\>\d t+2\left\<X(t),\,\d W(t)\right\>\\
&+2\left\<X(t),B(X_t)\right\>\d t+d\,\d t.
\end{align*}
Let $\xi_0(s)=0, s\in[-r_0,0]$. It follows from \eqref{B-Lip} that
\begin{align*}
\left\<X(t),B(X_t)\right\>&\leq |X(t)||B(X_t)|\leq \lambda_B\|X_t\|^2_{\infty}+|B(\xi_0)||X(t)|
\end{align*}
Since $\lambda_B<\beta \e^{-2\beta r_0}$, we can take small enough  $\varepsilon\in(0,1)$ and  $\epsilon\in(0,2\beta )$ such that
\begin{align}\label{coo}\frac{2\lambda_B}{1-\varepsilon}<(2\beta -\epsilon)\e^{-(2\beta -\epsilon)r_0}.
\end{align}

This together with Young's inequality and {\bf(H2)} yields that
\beq\label{NNPp}\begin{split}\d |X(t)|^2
&\le-2\beta |X(t)|^2\d t+\{2(\|b\|_{\infty}+|B(\xi_0)|)|X(t)|+d\}\d t\\
&+2\lambda_B\|X_t\|^2_{\infty}\d t+2\left\<X(t),\d W(t)\right\>\\
&\le(-2\beta +\epsilon)|X(t)|^2\d t+\left\{\frac{1}{\epsilon}(\|b\|_{\infty}+|B(\xi_0)|)^2+d\right\}\d t\\
&+2\lambda_B\|X_t\|^2_{\infty}\d t+2\left\<X(t),\,\d W(t)\right\>.
\end{split}\end{equation}
Thus it is not difficult to see that
\beq\label{NNPp'}\begin{split}\d \e^{(2\beta -\epsilon)t}|X(t)|^2\d t
&\le2\lambda_B\e^{(2\beta -\epsilon)t}\|X_t\|_{\infty}^2\d t\\
&+\e^{(2\beta -\epsilon)t}\left\{\frac{1}{\epsilon}(\|b\|_{\infty}+|B(\xi_0)|)^2+d\right\}\d t\\
&+2\e^{(2\beta -\epsilon)t}\left\<X(t),\,\d W(t)\right\>.
\end{split}\end{equation}
Let $\eta_r=\sup_{s\in[-r_0,r]}\e^{(2\beta -\epsilon)s^+}|X(s)|^2$, then
\beq\label{NNPp'''}\begin{split}
\E\eta_r
&\le\|\xi\|_{\infty}^2+2\lambda_B\e^{(2\beta -\epsilon)r_0}\E\int_0^r\eta_t\d t\\
&+\int_{0}^{r}\e^{(2\beta -\epsilon)t}\left\{\frac{1}{\epsilon}(\|b\|_{\infty}+|B(\xi_0)|)^2+d\right\}\d t\\
&+\E\sup_{s\in[0,r]}\int_0^s2\e^{(2\beta -\epsilon)t}\left\<X(t),\,\d W(t)\right\>
\end{split}\end{equation}
On the other hand, BDG inequality and Young's inequality imply that
\beq\label{NNP0p}\begin{split}&\E\sup_{s\in[0,r]}\int_0^s2\e^{(2\beta -\epsilon)t}\left\<X(t),\,\d W(t)\right\>\\
&\leq \E\left\{\int_0^r16\e^{(4\beta -2\epsilon)t}\left|X(t)\right|^2\d t\right\}^{\frac{1}{2}}\\
&\leq \varepsilon\E\eta_r+\int_0^r\e^{(2\beta -\epsilon)t}\times\frac{4}{\varepsilon}\d t.
\end{split}\end{equation}
Let $\gamma:=\frac{\frac{1}{\epsilon}(\|b\|_{\infty}+|B(\xi_0)|)^2+d+\frac{4}{\varepsilon}}{1-\varepsilon}$. Combining \eqref{NNPp'''} and \eqref{NNP0p}, we have
\beq\label{NNP0111'''}\begin{split}
\E\eta_r
&\le\frac{\|\xi\|_{\infty}^2}{1-\varepsilon}+\frac{2\lambda_B\e^{(2\beta -\epsilon)r_0}}{1-\varepsilon}\E\int_0^r\eta_t\d t+\gamma\int_{0}^{r}\e^{(2\beta -\epsilon)t}\d t.
\end{split}\end{equation}
By \eqref{sup0}, Gronwall's inequality implies that
\beq\label{NNP01'''}\begin{split}
\E\eta_t
&\le\exp\left\{\frac{2\lambda_B\e^{(2\beta -\epsilon)r_0}}{1-\varepsilon}t\right\} \frac{\|\xi\|_{\infty}^2}{1-\varepsilon}\\
&\quad+\gamma \int_{0}^{t}\exp\left\{\frac{2\lambda_B\e^{(2\beta -\epsilon)r_0}}{1-\varepsilon}(t-s)+(2\beta -\epsilon)s\right\}\d s.
\end{split}\end{equation}
Noting that $\E\eta_t\geq \e^{(t-r_0)(2\beta -\epsilon)}\E\|X_t\|_{\infty}^2$, we obtain that
\beq\label{NNP00'''}\begin{split}
\E\|X_t\|_{\infty}^2
&\le\e^{r_0(2\beta -\epsilon)}\exp\left\{\left(\frac{2\lambda_B\e^{(2\beta -\epsilon)r_0}}{1-\varepsilon}-(2\beta -\epsilon)\right)t\right\} \frac{\|\xi\|_{\infty}^2}{1-\varepsilon}\\
&+\e^{r_0(2\beta -\epsilon)}\gamma \int_{0}^{t}\exp\left\{\left(\frac{2\lambda_B\e^{(2\beta -\epsilon)r_0}}{1-\varepsilon}-(2\beta -\epsilon)\right)(t-s)\right\}\d s.
\end{split}\end{equation}
%and $$|B(X_t)|\leq |B(X_t)-B(\xi_0)|+|B(\xi_0)|\leq \lambda_3\|X_t\|_{\infty}+|B(\xi_0)|$$
%then
%\begin{align*}&\left\<X(t)+\mathbf{u}^{\lambda}(X(t),B(X_t)+\nabla\mathbf{u}^{\lambda}(X(t))B(X_t)\right\>\\
%&=\left\<X(t),B(X_t)\right\>+\left\<X(t), \nabla\mathbf{u}^{\lambda}(X(t))B(X_t)\right\>\\
%&+\left\<\mathbf{u}^{\lambda}(X(t),B(X_t)\right\>+\left\<\mathbf{u}^{\lambda}(X(t),\nabla\mathbf{u}^{\lambda}(X(t))B(X_t)\right\>\\
%&\leq \beta \|X_t\|^2_{\infty}-\lambda_2|X(t)|^2+|B(\xi_0)||X(t)|+\lambda_3\|X_t\|^2_{\infty}
%\end{align*}
This together with \eqref{coo} gives \eqref{E1''}.
\end{proof}
\begin{proof}[Proof of Theorem \ref{TE}]
(1) Let $X$ and $\bar{X}$ be solutions to \eqref{E1} with $X_0=\xi$, $\bar{X}_0=\eta$, then
\beq\label{E1'''}\beg{split}
&\d X(t)= \{-\beta X(t)+b(X(t))+B( X_t)\}\d t+\d W(t), \ \ X_0=\xi,\\
&\d \bar{X}(t)= \{-\beta \bar{X}(t)+b(\bar{X}(t))+B(\bar{X}_t)\}\d t+\d W(t), \ \ \bar{X}_{0}=\eta.
\end{split}
\end{equation}

For any $\delta\in(0,1)$ and $\lambda> \lambda_0(\delta)$, let $\theta^{\lambda}(x)=x+\mathbf{u}^{\lambda}(x)$. Combining \eqref{b2} and Lemma \ref{L2.1}, we have
\beq\label{PDE}
\frac{1}{2}\mathrm{Tr} (\nabla^2\mathbf{u}^\lambda)+\nabla_{b}\mathbf{u}^\lambda+b+\nabla_{-\beta\cdot}\mathbf{u}^\lambda=\lambda \mathbf{u}^\lambda.
\end{equation}

By \eqref{E1'''}, \eqref{PDE} and It\^o's formula, we have
\begin{align*}
&\d \theta^\lambda(X(t))= \{-\beta X(t)+\lambda \mathbf{u}^\lambda(X(t))+\nabla\theta^\lambda(X(t))B(X_t)\}\d t+ \nabla\theta^\lambda(X(t))\d W(t),  \\
&\d \theta^\lambda(\bar{X}(t))= \{-\beta\bar{X}(t)+\lambda \mathbf{u}^\lambda(\bar{X}(t))+\nabla\theta^\lambda(\bar{X}(t))B(\bar{X}_t)\}\d t+ \nabla\theta^\lambda(\bar{X}(t))\d W(t).
\end{align*}

%\beq\label{E1'}\beg{split}
%&\d \theta(X(t))= \lambda \mathbf{u}(X(t))\d t+ (\nabla\theta\sigma)(X(t))\,\d W(t) +\nabla\theta(X(t))B(X_t)\d t \ \ X_0=\xi, t\in[0,t_2], \\
%&\d \theta(\bar{X}(t))= \lambda \mathbf{u}(\bar{X}(t))\d t+ (\nabla\theta\sigma)(\bar{X}(t))\,\d W(t) +\nabla\theta(\bar{X}(t))B(\bar{X}_t)\d t \ \ \bar{X}_{t_2-t_1}=\xi, t\in[t_2-t_1,t_2]\end{split}
%\end{equation}

So, letting $\xi(t)=\theta^\lambda(X(t))-\theta^\lambda(\bar{X}(t))$, we arrive at
\beq\label{NN10}\beg{split}
\d|\xi(t)|^{2}
 =&2\langle -\beta X(t)-(-\beta)\bar{X}(t),\theta^\lambda(X(t))-\theta^\lambda(\bar{X}(t))\rangle\\
 \, &2\lambda\left<\xi(t),\mathbf{u}^{\lambda}(X(t))-\mathbf{u}^{\lambda}(\bar{X}(t))\right\>\d t\\
 &+2\left\<\xi(t),[\nabla\theta^{\lambda}(X(t))B(X_t)-\nabla\theta^{\lambda}(\bar{X}(t))B(\bar{X}_t)]\right\>\d t\\
&+2\left\<\xi(t),[\nabla\theta^{\lambda}(X(t))-\nabla\theta^{\lambda}(\bar{X}(t))]\d W(t)\right\>\\
&+\left\|[\nabla\theta^{\lambda}(X(t))-\nabla\theta^{\lambda}(\bar{X}(t))]\right\|^2_{HS}\,\d t
\end{split}\end{equation}
Firstly, it is easy to see that
\begin{align*}&2\langle -\beta X(t)-(-\beta)\bar{X}(t),\theta^\lambda(X(t))-\theta^\lambda(\bar{X}(t))\rangle\\
=&2\langle -\beta X(t)-(-\beta)\bar{X}(t),X(t)-\bar{X}(t)+\mathbf{u}^\lambda(X(t))-\mathbf{u}^\lambda(\bar{X}(t))\rangle\\
\leq&-2\beta|X(t)-\bar{X}(t)|^2+2\beta\delta |X(t)-\bar{X}(t)|^2
\end{align*}
By Lemma \ref{L2.1} and {\bf(H2)}, it holds that
\begin{align*}
&2\left\<\xi(t),[\nabla\theta^{\lambda}(X(t))B(X_t)-\nabla\theta^{\lambda}(\bar{X}(t))B(\bar{X}_t)]\right\>\\
%&=\Big\<X(t)-\bar{X}(t)+\mathbf{u}^{\lambda}(X(t))-\mathbf{u}^{\lambda}(\bar{X}(t)),\\
%&\qquad\qquad [B(X_t)-B(\bar{X}_t) +\nabla\mathbf{u}^{\lambda}(X(t))B(X_t)-\nabla\mathbf{u}^{\lambda}(\bar{X}(t))B(\bar{X}_t)]\Big\>\\
&\leq 2(1+\delta)|X(t)-\bar{X}(t)|\|\nabla\theta^{\lambda}(X(t))-\nabla\theta^{\lambda}(\bar{X}(t))\||B(X_t)|\\
&+2(1+\delta)|X(t)-\bar{X}(t)|\|\nabla\theta^{\lambda}(\bar{X}(t))\||B(X_t)-B(\bar{X}_t)|\\
&\leq 2(1+\delta)\Upsilon(\lambda,\delta)\|B\|_{\infty}|X(t)-\bar{X}(t)|^2+2(1+\delta)^2\lambda_B\|X_t-\bar{X}_t\|_\infty^2,
\end{align*}
and
\beq\label{XPP1}2\lambda |\xi(t)|\cdot|\mathbf{u}^{\lambda}(X(t))-\mathbf{u}^{\lambda}(\bar{X}(t))|\le 2\lambda(1+\delta)\delta\ |X(t)-\bar{X}(t)|^2.\end{equation}
Moreover,
\beq\label{XPP2}\beg{split} \left\|[\nabla\theta^{\lambda}(X(t))-\nabla\theta^{\lambda}(\bar{X}(t))]\right\|^2_{HS} \le d\Upsilon(\lambda,\delta)^2|X(t)-\bar{X}(t)|^2.\end{split}\end{equation}
Let
$$\Sigma(\lambda):=\frac{1}{(1-\delta)^2}\left(2(1+\delta)\delta\lambda +2\beta\delta+2(1+\delta)^2\lambda_B+2(1+\delta)\Upsilon(\lambda,\delta) \|B\|_{\infty}+d\Upsilon(\lambda,\delta)^2\right).$$
Since $\|\nabla \theta^\lambda(x)\|\geq (1-\delta)$ for any $x\in\mathbb{R}^d$, we have
\beq\label{NNP}\begin{split}\d |X(t)-\bar{X}(t)|^2
&\le\frac{-2\beta}{(1-\delta)^2}|X(t)-\bar{X}(t)|^2\d t\\
&+\Sigma(\lambda)\|X_t-\bar{X}_t\|_{\infty}^2\d t\\
&+\frac{2}{(1-\delta)^2}\left\<\xi(t),[\nabla\theta^{\lambda}(X(t))-\nabla\theta^{\lambda}(\bar{X}(t))]\d W(t)\right\>.
\end{split}\end{equation}
Similarly to \eqref{NNPp'}, it holds that
\begin{align*}\d \e^{\frac{2\beta}{(1-\delta)^2} t}|X(t)-\bar{X}(t)|^2\d t
&\le\Sigma(\lambda)\e^{\frac{2\beta}{(1-\delta)^2} t}\|X_t-\bar{X}_t\|_{\infty}^2\\
&+\frac{2}{(1-\delta)^2}\e^{\frac{2\beta}{(1-\delta)^2} t}\left\<\xi(t),[\nabla\theta^{\lambda}(X(t))-\nabla\theta^{\lambda}(\bar{X}(t))]\d W(t)\right\>.
\end{align*}

Set $\gamma_r=\sup_{s\in[-r_0,r]}\e^{\frac{2\beta}{(1-\delta)^2} s^{+}}|X(s)-\bar{X}(s)|^2$ and we get
\beq\label{NNP'''}\begin{split}
\E\gamma_r
&\le\|\xi-\eta\|_{\infty}^2+\Sigma(\lambda)\e^{\frac{2\beta}{(1-\delta)^2} r_0}\E\int_0^r\gamma_t\d t\\
&+\E\sup_{s\in[0,r]}\int_0^s\frac{2}{(1-\delta)^2}\e^{\frac{2\beta}{(1-\delta)^2} t}\left\<\xi(t),[\nabla\theta^{\lambda}(X(t))-\nabla\theta^{\lambda}(\bar{X}(t))]\d W(t)\right\>.
\end{split}\end{equation}
Again, BDG inequality and Young's inequality yield that for any $\varepsilon\in(0,1)$,
\beq\label{NNP0}\begin{split}&\E\sup_{s\in[0,r]}\int_0^s\frac{2}{(1-\delta)^2}\e^{\frac{2\beta}{(1-\delta)^2} t}\left\<\xi(t),[\nabla\theta^{\lambda}(X(t))-\nabla\theta^{\lambda}(\bar{X}(t))]\d W(t)\right\>\\
&\leq 2\E\left\{\int_0^r\frac{4}{(1-\delta)^4}\e^{\frac{4\beta}{(1-\delta)^2}t}\left|[\nabla\theta^{\lambda}(X(t))-\nabla\theta^{\lambda}(\bar{X}(t))]^\ast\xi(t)\right|^2\d t\right\}^{\frac{1}{2}}\\
&\leq 2\E\sqrt{ \frac{4}{(1-\delta)^4}\Upsilon(\lambda,\delta)^2\int_0^r\e^{\frac{4\beta}{(1-\delta)^2}t}|X(t)-\bar{X}(t)|^4\d t}\\
&\leq \varepsilon\E\gamma_r+\frac{4}{(1-\delta)^4\varepsilon}\Upsilon(\lambda,\delta)^2\E\int_0^r\gamma_t\d t.
\end{split}\end{equation}
Substituting \eqref{NNP0} into \eqref{NNP'''}, we obtain
\beq\label{NNP0'''}\begin{split}
\E\gamma_r
&\le\frac{\|\xi-\eta\|_{\infty}^2}{1-\varepsilon}+\frac{\Sigma(\lambda) \e^{\frac{2\beta}{(1-\delta)^2} r_0}+\frac{4}{(1-\delta)^4\varepsilon}\Upsilon(\lambda,\delta)^2}{1-\varepsilon}\E\int_0^r\gamma_t\d t.
\end{split}\end{equation}
Again by \eqref{sup0}, Gronwall's inequality implies that
\beq\label{NNP11'''}\begin{split}
\E\gamma_t
&\le\exp\left\{\frac{\Sigma(\lambda)\e^{\frac{2\beta}{(1-\delta)^2} r_0}+\frac{4}{(1-\delta)^4\varepsilon}\Upsilon(\lambda,\delta)^2}{1-\varepsilon}t\right\}\frac{\|\xi-\eta\|_{\infty}^2}{1-\varepsilon}.
\end{split}\end{equation}
Noting that $\E\gamma_t\geq \e^{(t-r_0)\frac{2\beta}{(1-\delta)^2} }\E\|X_t-\bar{X}_t\|_{\infty}^2$, we obtain
\begin{equation*}\begin{split}
&\E\|X_t-\bar{X}_t\|_{\infty}^2\\
&\le\e^{\frac{2\beta r_0}{(1-\delta)^2} }\exp\left\{\e^{\frac{2\beta r_0}{(1-\delta)^2} }\left(\frac{\Sigma(\lambda)+\frac{4\Upsilon(\lambda,\delta)^2}{(1-\delta)^4\varepsilon}}{1-\varepsilon}-\frac{2\beta}{(1-\delta)^2}  \e^{-\frac{2\beta r_0}{(1-\delta)^2} }\right)t\right\}\frac{\|\xi-\eta\|_{\infty}^2}{1-\varepsilon}.
\end{split}\end{equation*}
Thus, combining the definition of $\Sigma(\lambda)$ and $\Lambda(\lambda,\varepsilon,\delta)$, we prove (1).

(2) Now, if there exists $\tilde{\delta}, \tilde{\varepsilon}\in(0,1), \tilde{\lambda}> \lambda_0(\tilde{\delta})$ such that
$$\Lambda(\tilde{\lambda},\tilde{\varepsilon},\tilde{\delta})<2\beta \e^{-\frac{2\beta}{(1-\tilde{\delta})^2} r_0},$$
then there exists constants $\kappa_0,\kappa_2>0$ such that
\beq\label{NNP000'''}\begin{split}
\mathbb{W}_2(P_t(\xi,\cdot,P_t(\eta,\cdot))^2\leq \E\|X_t-\bar{X}_t\|_{\infty}^2
&\le\kappa_0\e^{-\kappa_2t}\|\xi-\eta\|_{\infty}^2.
\end{split}\end{equation}
This yields that for any $0<t<s$,
\beq\label{cauchy}\begin{split}
\E\|X^\xi_t-X^\xi_s\|_{\infty}^2=\E\left\|X^\xi_t-X^{X^\xi_{s-t}}_t\right\|_{\infty}^2
&\le\kappa_0\e^{-\kappa_2t}\mathbb{E}\|\xi-X^\xi_{s-t}\|_{\infty}^2,
\end{split}\end{equation}
Noting that $\Lambda(\tilde{\lambda},\tilde{\varepsilon},\tilde{\delta}) \geq \frac{2\lambda_B}{1-\tilde{\varepsilon}}$ and $$\Lambda(\tilde{\lambda},\tilde{\varepsilon},\tilde{\delta})<2\beta \e^{-\frac{2\beta}{(1-\tilde{\delta})^2} r_0},$$ we conclude that $$\lambda_B<\beta \e^{-2\beta r_0}.$$
According to Lemma \ref{supp}, it holds that $$\sup_{r\geq 0}\mathbb{E}\|X^\xi_{r}\|_{\infty}^2<\infty.$$
Thus, \beq\label{cauchy0}\begin{split}
\mathbb{W}_2(P_t(\xi,\cdot),P_s(\xi,\cdot))^2\le\E\|X^\xi_t-X^\xi_s\|_{\infty}^2\leq \kappa_1(\xi)\e^{-\kappa_2t}
\end{split}\end{equation}
holds for some positive constant $\kappa_1(\xi)$ depending on $\xi$.
So, there exists a probability measure $\mu_\xi$ such that
\beq\label{con}\begin{split}
\mathbb{W}_2(P_t(\xi,\cdot),\mu_\xi)^2\leq \kappa_1(\xi)\e^{-\kappa_2t}.
\end{split}\end{equation}
It remains to prove that $\mu_\xi$ does not depend on $\xi$. For any $\xi,\eta\in\C$,
\beq\label{cauchy1}\begin{split}
\mathbb{W}_2(\mu_\xi,\mu_\eta)^2&\leq \mathbb{W}_2(P_t(\xi,\cdot),\mu_\xi)^2+\mathbb{W}_2(P_t(\xi,\cdot),P_t(\eta,\cdot))^2+\mathbb{W}_2(P_t(\eta,\cdot),\mu_\eta)^2\\
&\leq \kappa_1(\xi)\e^{-\kappa_2t}+\kappa_0\e^{-\kappa_2t}\|\xi-\eta\|_{\infty}^2+\kappa_1(\eta)\e^{-\kappa_2t}.
\end{split}\end{equation}
Letting $t\to\infty$, we obtain $\mu_\xi=\mu_\eta.$

Next, for any $t_0>r_0$ and $t>t_0$, by the semigroup property, \eqref{Ent00} and \eqref{cauchy0}, we have
\begin{align*}\Ent(P_{t}(\xi,\cdot)|P_{t}(\eta,\cdot))&=\Ent(P_{t-t_0}(\xi,\cdot)P_{t_0}|P_{t-t_0}(\eta,\cdot)P_{t_0})\\
&\leq  \frac{C(t_0)}{(t_0-r_0)\wedge 1}\mathbb{W}_2(P_{t-t_0}(\xi,\cdot),P_{t-t_0}(\eta,\cdot) )^2\\
&\leq \frac{C(t_0)}{(t_0-r_0)\wedge 1}\kappa_0\e^{\kappa_2t_0}\e^{-\kappa_2t}\|\xi-\eta\|_{\infty}^2. \end{align*}
Thus \eqref{ETX} implies that for any $t>t_0$,
\beq\label{TT'}\|P_t(\xi,\cdot)-P_t(\eta,\cdot)\|_{var}^2\le \frac{C(t_0)}{2(t_0-r_0)\wedge 2}\kappa_0\e^{\kappa_2t_0}\e^{-\kappa_2t}\|\xi-\eta\|_{\infty}^2. \end{equation}
Combining \eqref{Ent00}, \eqref{TT00}, \eqref{con} and the semigroup property, since $\mu$ is the invariant probability measure, we have
\begin{align*}\Ent(P_{t}(\xi,\cdot)|\mu)&=\Ent(P_{t-t_0}(\xi,\cdot)P_{t_0}|\mu P_{t_0})\\
&\leq  \frac{C(t_0)}{(t_0-r_0)\wedge 1}\mathbb{W}_2(P_{t-t_0}(\xi,\cdot),\mu )^2\\
&\leq \frac{C(t_0)}{(t_0-r_0)\wedge 1}\kappa_1(\xi)\e^{\kappa_2t_0}\e^{-\kappa_2t}. \end{align*}
and
\begin{align*}\|P_t(\xi,\cdot)-\mu\|_{var}^2\le \frac{C(t_0)}{2(t_0-r_0)\wedge 2}\kappa_1(\xi)\e^{\kappa_2t_0}\e^{-\kappa_2t}. \end{align*}
Thus, we complete the proof.
\end{proof}

\paragraph{Acknowledgement.} The author would like to thank Professor Feng-Yu Wang and Shao-Qin Zhang for helpful comments.

\end{document}